\documentclass[10pt]{article}
\usepackage{amsmath,amssymb,amsthm}
\usepackage{graphicx,subfig,float,url,color}
\usepackage{mathrsfs}
\usepackage[colorlinks=true]{hyperref}
\usepackage{pdfsync}
\usepackage{stmaryrd} % \llbracket et \rrbracket
\usepackage{dsfont}

\graphicspath{{fig/}}

\topmargin -1cm
\textheight 21cm
\textwidth 15cm 
\oddsidemargin 1cm

\newcommand{\R} {\mathbb{R}}

\renewcommand{\geq}{\geqslant}
\renewcommand{\leq}{\leqslant}

\newtheorem{theorem}{Theorem}

\theoremstyle{definition}
\newtheorem{remark}{Remark}

\title{Combination of direct methods and homotopy in numerical optimal control: application to the optimization of chemotherapy in cancer}

\author{Antoine Olivier\thanks{Sorbonne Universit\'es, UPMC Univ Paris 06, CNRS UMR 7598, Laboratoire Jacques-Louis Lions, F-75005, Paris, France}  \footnotemark[3]\; \, Camille Pouchol\footnotemark[1] \thanks{INRIA Team Mamba, INRIA Paris, 2 rue Simone Iff, CS 42112, 75589 Paris, France} \thanks{e-mail: \href{mailto:olivier@ljll.math.upmc.fr}{olivier@ljll.math.upmc.fr} (corresponding author), \href{mailto:pouchol@ljll.math.upmc.fr}{pouchol@ljll.math.upmc.fr}}}

\date{}

\begin{document}

\maketitle

\begin{abstract}
We consider a state-constrained optimal control problem of a system of two non-local partial-differential equations, which is an extension of the one introduced in a previous work in mathematical oncology. The aim is to minimize the tumor size through chemotherapy while avoiding the emergence of resistance to the drugs. The numerical approach to solve the problem was the combination of direct methods and continuation on discretization parameters, which happen to be insufficient for the more complicated model, where diffusion is added to account for mutations. 
In the present paper, we propose an approach relying on changing the problem so that it can theoretically be solved thanks to a Pontryagin Maximum Principle in infinite dimension. This provides an excellent starting point for a much more reliable and efficient algorithm combining direct methods and continuations. The global idea is new and can be thought of as an alternative to other numerical optimal control techniques.

\end{abstract}

%\keywords{Optimal control, Direct methods, Homotopy, Mathematical oncology, Pontryagin maximum principle.}

%\subclass{49M05, 49M25, 92C50.}

\section{Introduction}

The motivation for this work is the article~\cite{Pouchol2016}, itself initiated by~\cite{Lorz2013}. In the former, the subject was the theoretical and numerical analysis of an optimal control problem coming from oncology. Through chemotherapy, it consists of minimizing the number of cancer cells at the end of a given therapeutic window. The underlying model was an integro-differential system for the time-evolution of densities of cancer and healthy cells, structured by their continuous level of resistance to chemotherapeutic drugs.
The model took into account cell proliferation and death, competition between the cells, and the effect of chemotherapy on them. The optimal control problem also incorporated constraints on the doses of the drugs, as well as constraints on the tumor size and on the healthy tissue. \par

In~\cite{Pouchol2016}, the numerical resolution of the optimal control problem was made through a direct method, thanks to a discretization both in time and in the phenotypic variable. It led to a complex nonlinear constrained optimization problem, for which even efficient algorithms will fail for large discretization parameters because they require a good initial guess. To overcome this, the idea was to perform (with AMPL and IPOPT, see below) a continuation on the discretization parameters, starting from low values (\textit{i.e.}, a coarse discretization) for which the optimization algorithm converges regardless of the starting point. 

A clear optimal strategy emerged from these numerical simulations when the final time was increased. It roughly consists of first using as few drugs as possible during a long first phase to avoid the emergence of resistance. Cancer cells would hence concentrate on a sensitive phenotype, allowing for an efficient short second phase with the maximum tolerated doses. 

The model of~\cite{Pouchol2016} did not include \textit{epimutations}, namely heritable changes in DNA expression which are passed from one generation of cells to the others, which are believed to be very frequent in the life-time of a tumor. Our aim here is to numerically address the optimal control problem with the epimutations modeled through diffusion operators (Laplacians), in order to test the robustness of the optimal strategy. 
%The equations have thus become partial differential equations, which are non-local because of the competition term.

However, the previous numerical technique already failed (even without Laplacians) to get fine discretizations when the final time is very large: the optimization stops converging when the discretization parameters are large. The values reached for the discretization in time were enough to observe the optimal structure, in particular all the arcs that were expected for theoretical reasons.

The addition of Laplacians significantly increases the run-time and again fails to work once the discretization parameters are too large when the final time itself is large, and some arcs become difficult to observe. We thus have to find an alternative method to see whether the optimal strategy found in~\cite{Pouchol2016} is robust with respect to adding the effect of epimutations.

This article is devoted to the presentation of a method which, up to our knowledge, is new. In our case, it provides a significant improvement in run-time and precision, and shows that the optimal strategy keeps an analogous structure when epimutations are considered.  The method relies on the two following steps:
\begin{itemize}
\item first, simplify the optimal control problem up to a point where we can show that, thanks to a Pontryagin Maximum Principle (PMP) in infinite dimension, the optimal controls are bang-bang and thus can be reduced to their switching times, which are very easy to estimate numerically. This is equivalent to setting several coefficients to $0$ in the model.
\item second, perform a continuation on these parameters on the optimization problems obtained with a direct method, starting from the simplified problem all the way back to the full optimal control problem.
\end{itemize}
It allows us to start the homotopy method on this simplified optimization problem with an already fine discretization, actually much finer than the maximal values which could be obtained with the previous homotopy method. 
We also believe that the theoretical result obtained for the simplified optimal control problem can serve as the starting step for many other optimal control problems of related models in mathematical biology.

\paragraph{Numerical optimal control and novelty of the approach.}
Discretizing the time variable, control and state variables to approximate a control problem for an ODE (which is an optimization problem in infinite dimension) by a finite-dimensional optimization problem has now become the most standard way of proceeding. These so-called direct methods thus lead to using efficient optimization algorithms, for example through the combination of automatic differentiation softwares (such as the modeling language AMPL, see~\cite{Fourer2002}) and expert optimization routines (such as the open-source package IPOPT, see~\cite{Waechter2006}). 

Another approach is to use indirect methods, where the whole process relies on a PMP, leading to a shooting problem on the adjoint vector. Numerically, one thus needs to find the zeros of an appropriate function, which is usually done through a Newton-like algorithm. 
For a comparison of the advantages and drawbacks of direct and indirect methods, we refer to the survey~\cite{Trelat2012}.

For both direct and indirect methods, the numerical problem shares at least the difficulty of finding an initial guess leading to convergence of the optimization algorithm or the Newton algorithm, respectively (it is well known that Newton algorithms can have a very small domain of convergence). To tackle this issue in the case of indirect methods, it is very standard to use homotopy techniques, for instance to simplify the problem so that one can have a good idea for a starting point as in~\cite{Cerf2012,Chupin2016}, or to change the cost in order to benefit from convexity properties, as in~\cite{Haberkorn2006,Caillau2012}.
Besides, when studying optimal control problem for ODE systems, a common approach is to use of so-called hybrid methods, in order to take advantage from the better convergence properties of the direct method and the high accuracy provided by the indirect method. We refer to~\cite{Trelat2012,Bulirsch1993,Pesch1994,vonStryk1992} for further developments on this subject. 

We have found the combination of direct methods and continuation (such as the one done in~\cite{Pouchol2016}) to be much less common in the literature, see however~\cite{Bulirsch1993}. For a mathematical investigation of why continuation methods are mathematically valid, see~\cite{Trelat2012}. 

It is however believed that direct methods typically lead to optimization problems with several local minima~\cite{Trelat2012}, as it could happen for the starting problem (with low discretization), which has yet no biological meaning. This implies one important drawback of a continuation on discretization parameters with direct methods: although the algorithm will quickly converge in such cases, one cannot \textit{a priori} exclude that one will get trapped in local minima that are meaningless, with the possibility for such trapping to propagate through the homotopy procedure.

Our approach of simplifying the optimal control problem so that it can be analyzed with theoretical tools such as a PMP is a way to address the previous problem and to decrease the computation time. The simplified optimal control problem, once approximated by a direct method, will indeed efficiently be solved even with a very refined discretization. Therefore, another original aspect of our work, due to the complex PDE structure of the model, is the use of the PMP in view of building an initial guess for the direct method, in contrast with the hybrid approach we described for ODE systems, where direct methods serve to initialize shooting problems.

More generally, we advocate for the strategy of trying to simplify the problem, testing whether a PMP can provide a good characterization of the optimal controls. Then continuation with direct methods are performed to get back to the original and more difficult one. We believe that this can always be tried as a possible strategy to solve any optimal control problem (ODE or PDE) numerically. 

\paragraph{Outline of the paper.}
The paper is organized as follows. Section \ref{Section2} is devoted to a detailed presentation of the optimal control problem and the results that were obtained in~\cite{Pouchol2016}. Section \ref{Section3} presents the simplified optimal control problem together with the application of a Pontryagin Maximum Principle in infinite dimension which almost completely determines the optimal controls. In Section \ref{Section4}, we thoroughly explain how direct methods for the optimal control of PDEs and continuations can be combined to solve a given PDE optimal control problem. We then combine these techniques and the result of Section \ref{Section3} to build an algorithm solving the complete optimal control problem. In Section \ref{Section5} the numerical simulations obtained thanks to the algorithm are presented. Finally, we will give some perspectives in Section \ref{Section6} before concluding in Section \ref{Section7}.

\section{Modeling Approach and Optimal Control Problem}
\label{Section2}
\subsection{Modeling Approach}
Let us first explain the modeling approach, which is based on the classical logistic ODE \[\frac{d N}{d t} = \left(r - d N \right) N.\]
In this setting, individuals $N(t)$ have a net selection rate $r$, together with an additional death term $d N$ increasing with $N$: the more individuals, the more death due to competition for resources and space. \par

If the individuals have different selection and death rates $r(x)$ and $d(x)$ depending on a continuous variable $x$ which we will call \textit{phenotype} (the size of the individual, for example), then a natural extension to the previous model is to study the density of individuals $n(t,x)$ of phenotype $x$, at time $t$, satisfying the integro-differential equation \[\frac{\partial n}{\partial t}(t,x) = \big(r(x) - d(x) \rho(t) \big) n(t,x),\] where \[\rho(t):=\int n(t,x) \, dx.\] \par

At this stage, individuals do not change phenotype over time, nor can they give birth to offspring with different phenotypes. Accounting for such a possibility consists in modeling random mutations (respectively random epimutations), \textit{i.e.}, heritable changes in the DNA (respectively heritable changes in DNA expression). The model is complemented with a diffusion term and takes the form \[\frac{\partial n}{\partial t}(t,x) = \big(r(x) - d(x) \rho(t) \big) n(t,x) + \beta \Delta n(t,x),\] together with Neumann boundary conditions if $x$ lies in a bounded domain, thus becoming a non-local partial differential equation because of the integral term $\rho$. \par

Such so-called \textit{selection-mutation} models are actively studied as they represent a suitable mathematical framework for investigating how selection occurs in various ecological scenarios~\cite{Diekmann2004,Diekmann2005,Perthame2006}, thus belonging to the branch of mathematical biology called adaptive dynamics. When $\beta = 0$, the previous model indeed leads to asymptotic selection: $n$ converges to a sum of Dirac masses located on the set of phenotypes on which $\frac{r}{d}$ reaches its maximum~\cite{Pouchol2016,Perthame2006}. In particular, if this set is reduced to a singleton $x_0$ it holds that $\frac{n(t, \cdot)}{\rho(t)}$ weakly converges to a Dirac at $x_0$ as $t$ goes to $+\infty$. \par

\subsection{The Optimal Control Problem}

The model considered in this paper is an extension of the one studied in~\cite{Pouchol2016} by the addition of epimutations (it is believed that mutations occur on a too long time-scale and are consequently neglected~\cite{Chisholm2016}). It describes the dynamics of two populations of cells, healthy and cancer cells, which are both structured by a trait $x \in [0,1]$ representing resistance to chemotherapy, which ranges from sensitiveness ($x=0$) to resistance ($x=1$). $x$ is taken to be a continuous variable because resistance to chemotherapy can be correlated to biological characteristics which are continuous, see~\cite{Chisholm2016} for more details. Chemotherapy is modeled by two functions of time $u_1$ and $u_2$, standing for the rate of administration of cytotoxic drugs and cytostatic drugs, respectively. The first type of drug actively kills cancer cells, while the second slows down their proliferation. \par

The system of equations describing the time-evolution of the density of healthy cells $n_H(t,x)$ and cancer cells $n_C(t,x)$ is given by 
\begin{equation*}
\begin{split}%R_C(x,\rho_C,\rho_H,0,u_2)
\frac{\partial n_H}{\partial t}(t,x) &= 
\left[\frac{r_H(x)}{1+\alpha_H u_2(t)} - d_H(x) I_H(t) - u_1(t) \mu_H(x) \right]   n_H(t,x)  + \beta_H \Delta n_H(t,x), \\ \vspace{.8em}
\frac{\partial n_C}{\partial t}(t,x) & = 
\left[\frac{r_C(x)}{1+\alpha_C u_2(t)} - d_C(x) I_C(t) - u_1(t) \mu_C(x)\right]  n_C(t,x) + \beta_C \Delta n_C(t,x),
\end{split}
\end{equation*}
starting from an initial condition $(n_H^0, n_C^0)$ in $C([0,1])^2$, with Neumann boundary conditions in $x=0$ and $x=1$.

Let us describe in more details the different terms and parameters appearing above, with the functions $r_H$, $r_C$, $d_H$, $d_C$, $\mu_H$ $\mu_C$ all continuous and non-negative on $[0,1]$, with $r_H$, $r_C$, $d_H$, $d_C$ positive on $[0,1]$.
\begin{itemize} 
\item
The terms $\frac{r_H(x)}{1+\alpha_H u_2(t)}$, $\frac{r_C(x)}{1+\alpha_C u_2(t)}$ stand for the selection rates lowered by the effect of the cytostatic drugs, with \[\alpha_H < \alpha_C.\]
\item 
The non-local terms $d_H(x) I_H(t)$, $d_C(x) I_C(t)$ are added death rates to the competition inside and between the two populations, with \[I_H := a_{HH} \rho_H + a_{HC} \rho_C, \; I_C := a_{CC} \rho_C + a_{CH} \rho_H\] and as before \[\rho_i(t) = \int_0^1 n_i(t,x) \, dx, \; i=H,C.\] We make the important assumption that the competition inside a given population is greater than between the two populations: \[a_{HC}<a_{HH}, \; a_{CH} < a_{CC}.\]
\item 
The terms $\mu_H(x) u_1(t)$, $\mu_C(x) u_1(t)$ are added death rates due to the cytotoxic drugs. Owing to the meaning of $x=0$ and $x=1$, $\mu_H$ and $\mu_C$ are taken to be decreasing functions of $x$.
\item 
The terms $\beta_H \Delta n_H(t,x)$ and $\beta_C \Delta n_C(t,x)$ model the random epimutations, with their rates $\beta_H$, $\beta_C$ such that 
\[\beta_H<\beta_C,\]
because cancer cells mutate faster than healthy cells.
\end{itemize}

Finally, for a fixed final time $T$ we consider the optimal control problem (denoted in short by $(\textbf{OCP}_{\textbf{1}})$) of minimizing the number of cancer cells at the end of the time-frame
\begin{equation*}
\inf \, \rho_C(T) 
\end{equation*}
as a function of the $L^\infty$ controls $u_1$, $u_2$ subject to $L^\infty$ constraints for the controls and two state constraints on $(\rho_H, \rho_C)$, for all $0 \leq t \leq T$:
\begin{itemize} 
\item The maximum tolerated doses cannot be exceeded: 
\[0 \leq u_1(t) \leq u_1^{max}, \;\; 0 \leq u_2(t) \leq u_2^{max}.\]

\item The tumor cannot be too big compared to the healthy tissue:
\begin{equation} \label{contHC} \frac{\rho_H(t)} {\rho_H(t) + \rho_C(t)} \geq \theta_{HC},
\end{equation} 
with $0 < \theta_{HC} <1$.

\item Toxic side-effects must remain controlled:
\begin{equation} \label{contH} \rho_H(t) \geq \theta_{H} \rho_H(0),\end{equation}
with $0 < \theta_{H} <1$.
\end{itemize}

Optimal control problems applied to cancer therapy have started being considered long ago, see~\cite{Schaettler2015} for a complete presentation. However, the usual way of taking resistance into account is to consider that cells are either resistant or sensitive, leading to ODE models, as for example in~\cite{Costa1992,kimmelSwierniak2006,Ledzewicz2006,Ledzewicz2014,Carrere2017}. Considering both a continuous modeling of resistance and the effect of chemotherapy is more recent, as in~\cite{Pouchol2016,Chisholm2016,Lorz2013a,Greene2014,Lorenzi2015a}. We also mention some cases where an additional space variable is considered~\cite{Lorz2013,Lorz2015}.

\begin{remark}
\label{rq:OtherObjective}
In practice, other objective functions can be deemed pertinent. For example, if there is no hope of actually getting rid of the tumour, the goal might be to try and control its size on the whole interval. Thus, we will also consider objective functions of the form of convex combinations $$ \lambda_0 \int_0^T \rho_C(s) \, ds + (1- \lambda_0) \rho_C(T), $$ 
where $0 \leq \lambda_0 \leq 1$. 
For $\lambda_0 = 0$, we recover the previous objective function, while for $\lambda_1 = 1$ only the $L^1$ norm of $\rho_C$ is considered.
\end{remark}

\subsection{Previous Results}
In~\cite{Pouchol2016}, we studied this system and the optimal control problem both theoretically and numerically in the case of selection exclusively, namely for $\beta_H = \beta_C = 0$.

First, we proved that for constant controls (\textit{i.e.}, constant doses), the generic behavior is the convergence of both densities to Dirac masses. When these doses are high, the model thus reproduces the clinical observation that high doses usually fail at controlling the tumor size on the long run. They might indeed initially lead to a decrease of the overall cancerous population. However, this is the consequence of only the sensitive cells being killed, while the most resistant cells are selected (in our mathematical framework, this corresponds to the cancer cell density concentrating on a resistant phenotype). Further treatment is then inefficient and the tumor starts growing again. \par

As for the optimal control problem which is our focus in this work, the main findings without diffusion were the following: when the final time $T$ becomes large, the optimal controls acquire some clear structure which is made of two main phases. 

\begin{itemize} 
\item
First, there is a long phase with low doses of drugs ($u_1 = 0$ with our parameters), along which the constraint \eqref{contHC} quickly saturates. At the end of this first long arc, both densities have concentrated on a sensitive phenotype.
\item
Then, there is a second short phase, which is the concatenation of two arcs. The first one is a free arc (no state constraint is saturated) along which $u_1 = u_1^{max}$ and $u_2 = u_2^{max}$, with a quick decrease of both cell numbers $\rho_H$ and $\rho_C$, up until the constraint on the healthy cells \eqref{contH} saturates. The last arc is constrained on \eqref{contH} with boundary controls ($u_2 = u_2^{max}$ with our parameters), allowing for a further decrease of $\rho_C$.
\end{itemize}

In other words, the optimal strategy is to let the cell densities concentrate on sensitive phenotypes so that the full power of the drugs can efficiently be used. This strategy is followed as long as the healthy tissue can endure it, and then lower doses are used to keep on lowering $\rho_C$ while still satisfying the toxicity constraint.

\section{Resolution of a Simplified Model}
\label{Section3} 

\subsection{Simplified Model for one Population with no State Constraints}
We here introduce the simpler optimal control problem. Its precise link with the initial optimal control $(\textbf{OCP}_{\textbf{1}})$ will be explained in Section \ref{Section4}. 
It is based on the equation
\begin{equation}
\label{eq_state}
\frac{\partial n_C}{\partial t}(t,x) = \left[ \frac{r_C(x)}{1+\alpha_C u_2(t)} - d_C(x) \rho_C(t) - \mu_C(x) u_1(t) \right] n_C(t,x),
\end{equation}
starting from $n_C^0$, where $\rho_C(t) = \int^1_0{n_C(t,x)\,dx}$. We denote by $(\textbf{OCP}_{\textbf{0}})$ the optimal control problem
\begin{equation*}
%\label{ocp_0}
\min_{(u_1,u_2) \in \mathcal{U}} \rho_C(T)
\end{equation*}
where $\mathcal{U}$ is the space of admissible controls 
\[
\mathcal{U}:=\left\{ (u_1,u_2) \in L^\infty([0,T],\R) \text{ such that } 0 \leq u_1 \leq u_1^{max},~0 \leq u_2 \leq u_2^{max},~ \text{a.e. on } [0,T]  \right\}.
\]

\subsection{A Maximum Principle in Infinite Dimension}

\paragraph{General statement.}
Let $T$ be a fixed final time, $X$ be a Banach space and $n_0 \in X$, $U$ be a separable metric space. We also consider two mappings $f : [0,T] \times X \times U \rightarrow X$ and $f^0 : [0,T] \times X \times U \rightarrow \R$.

We consider the optimal control problem of minimizing an integral cost, with a free final state $n(T)$:
\begin{equation*}
\inf_{u \in \mathcal{U}} J(u(\cdot)) := \int^T_0{f^0(t,n(t),u(t))\,dt},
\end{equation*}
where $y(\cdot)$ is the solution\footnote{Note that the evolution equation has to be understood in the mild sense 
\[
n(t) = n_0 + \int^t_0{f(s,n(s),u(s))\,ds.}
\]} of 
\begin{equation*}
\dot{n}(t) = f(t,n(t),u(t)), ~~n(0) = n_0.
\end{equation*}

In~\cite[Chapter~4]{li2012}, necessary conditions for optimality are presented, for such problems (they are actually presented in~\cite{li2012} in a more general setting, but for the sake of simplicity, we restrict ourselves to the material required to solve $(\textbf{OCP}_{\textbf{0}})$). The set of these conditions is referred to as a Pontryagin Maximum Principle (PMP).

Under appropriate regularity assumptions on $f$ and $f^0$, it states that any optimal pair $(\overline{n}(\cdot),\overline{u}(\cdot))$ must be such that there exists a nontrivial pair $(p^0, p(\cdot)) \in \R \times C([0,T],X)$ satisfying
\begin{equation}
\label{p0_neg}
p^0 \leq 0,
\end{equation}
%%%%%%%%%%
\begin{equation}
\label{eq_costate}
\dot{p}(t) = - \frac{\partial H}{\partial n}(t,\overline{n}(t),\overline{u}(t), p^0, p(t)),
\end{equation}

\begin{equation}
\label{eq_maximisation}
H(t,\overline{n}(t),\overline{u}(t), p^0, p(t)) = \max_{v \in U} H(t,\overline{n}(t),v, p^0, p(t)),
\end{equation}
where the hamiltonian $H$ is defined as $H(t,n,u,p,p^0) := p^0 f^0(t,n,u) + \langle p,f(t,n,u) \rangle$.

\begin{remark}
\label{rq_freefinalstate}
If the final state is free, \eqref{p0_neg} can be improved to $p_0 <0$\footnote{An extremal in the PMP is said to be normal (resp. abnormal) whenever $p^0 \neq 0$ (resp. $p^0 = 0$). Here, it means that there is no abnormal extremal.} and we have the additional transversality condition:
\begin{equation}
\label{eq_transverse}
p(T) = 0.
\end{equation}
Besides, if the final state were fixed, there would be additional assumptions to check in order to apply the PMP, assumptions that are automatically fulfilled whenever $n(T)$ is free. We refer to~\cite[Chapter 4 - Section 5]{li2012} for more details on this issue.
\end{remark}

\paragraph{Application to the problem  \emph{$(\textbf{OCP}_{\textbf{0}})$}.} By applying the PMP, we derive the following theorem on the optimal control structure.

\begin{theorem}
\label{th_simplified}
Let $({n}_C(\cdot),{u}(\cdot))$ be an optimal solution for $(\textbf{OCP}_{\textbf{0}})$. There exists $t_1 \in [0,T[$ and $t_2 \in [0,T[$ such that
\begin{equation*}
\begin{array}{lr}
u_1(t) = u_1^{max} \mathds{1}_{[t_1,T]}, &~~u_2(t) = u_2^{max} \mathds{1}_{[t_2,T]}. 
\end{array}
\end{equation*}
\end{theorem}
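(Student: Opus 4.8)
The plan is to apply the infinite-dimensional PMP stated above with $X = C([0,1])$, $U = [0,u_1^{max}] \times [0,u_2^{max}]$, state $n = n_C$, and dynamics given by the right-hand side of \eqref{eq_state}. Since the final state $n_C(T)$ is free and the cost is $\rho_C(T) = \int_0^1 n_C(T,x)\,dx$ (a terminal cost, which can be rewritten as an integral cost by the usual trick of introducing an extra state or, more simply, by directly using the terminal-cost version of the PMP), Remark~\ref{rq_freefinalstate} applies: we may take $p^0 < 0$, normalize $p^0 = -1$, and the adjoint state $p(\cdot) \in C([0,T], C([0,1])')$ satisfies the transversality condition, which here reads $p(T) = -\mathbf{1}$ (the constant function, coming from differentiating $\int_0^1 n_C(T,x)\,dx$). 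First I would write down the Hamiltonian explicitly,
\[
H = \int_0^1 \left[ \frac{r_C(x)}{1+\alpha_C u_2} - d_C(x)\rho_C - \mu_C(x) u_1 \right] n_C(x)\, p(x)\, dx,
\]
and compute the adjoint equation \eqref{eq_costate} by differentiating in $n_C$, carefully accounting for the non-local dependence through $\rho_C = \int_0^1 n_C\,dx$; this yields a linear backward equation for $p$.

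The key structural point is the \emph{maximization condition} \eqref{eq_maximisation}. Since $H$ is affine in $u_1$ and the map $u_2 \mapsto \frac{1}{1+\alpha_C u_2}$ is strictly decreasing and strictly convex, the pointwise-in-time maximization over $(u_1,u_2) \in U$ decouples: for $u_1$ we maximize $-u_1 \int_0^1 \mu_C(x) n_C(t,x) p(t,x)\,dx$, so the optimal $u_1(t)$ is $0$ or $u_1^{max}$ according to the sign of the switching function $\phi_1(t) := \int_0^1 \mu_C(x) n_C(t,x) p(t,x)\,dx$; for $u_2$, since the coefficient of $\frac{1}{1+\alpha_C u_2}$ is $\int_0^1 r_C(x) n_C(t,x) p(t,x)\,dx$, a strictly convex function of $u_2$ is being maximized over an interval, so the maximum is attained at an endpoint, i.e. $u_2(t) \in \{0, u_2^{max}\}$, with the switch governed by the sign of $\phi_2(t) := \int_0^1 r_C(x) n_C(t,x) p(t,x)\,dx$. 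So both controls are automatically bang-bang; what remains is to show each has \emph{at most one switch}, from the low value to the high value, so that the switching sets are intervals $[t_1,T]$ and $[t_2,T]$.

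To pin down the monotonicity of the switching functions, I would first establish a sign property: $n_C(t,x) > 0$ for all $t,x$ (immediate from \eqref{eq_state}, which is linear in $n_C$ with the initial datum positive — or nonnegative and then positive for $t>0$ by the structure of the equation), and $p(t,x) < 0$ for all $t \in [0,T]$, $x \in [0,1]$. The latter should follow from the backward adjoint equation with terminal condition $p(T) = -\mathbf 1 < 0$: the adjoint equation is linear in $p$, and a maximum-principle / Grönwall-type argument run backward in time keeps $p$ strictly negative on $[0,T]$ (the non-local term needs a little care but contributes with a controllable sign). Consequently $\phi_1(t) = \int \mu_C n_C p\,dx < 0$ and $\phi_2(t) = \int r_C n_C p\,dx < 0$ for \emph{all} $t$ — wait, that would force $u_1 \equiv u_2 \equiv$ a single constant, which is too strong and wrong; so the real work is that the signs must be read against the maximization correctly (maximizing $-u_1 \phi_1$ with $\phi_1<0$ gives $u_1 = u_1^{max}$, and similarly the convex maximization in $u_2$), and the genuine content of the theorem is instead the claim $t_1 = t_2 = 0$ up to the statement's generality — so I would instead argue directly from the explicit backward integration of $\phi_1,\phi_2$: differentiating $\phi_i$ in $t$ using both the state and adjoint equations, one gets $\dot\phi_i = (\text{something}) \cdot \phi_i + (\text{lower-order local terms})$, and combined with $\phi_i(T)<0$ one shows $\phi_i$ cannot change sign more than once on $[0,T]$, equivalently $\{u_i = u_i^{max}\}$ is an interval abutting $T$.

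\textbf{Main obstacle.} The delicate step is the rigorous analysis of the switching functions $\phi_1,\phi_2$: establishing strict negativity of the adjoint state $p$ (a backward maximum principle for a non-local linear equation in the Banach space $C([0,1])$, with the mild/integral formulation) and then showing each switching function vanishes at most once. The decoupling and bang-bang character are essentially free from the affine/convex structure; controlling the \emph{number} of switches — ruling out $u_i$ turning back off after turning on — is where the non-local coupling through $\rho_C$ and the interplay between the state and adjoint dynamics must be handled with care, presumably via a Grönwall or differential-inequality argument on $\phi_i$.
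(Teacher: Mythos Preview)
Your framework is correct: apply the PMP, read off the bang--bang structure from the affine/convex dependence of $H$ on $(u_1,u_2)$, and define the switching functions $\phi_1,\phi_2$. Your terminal-cost formulation with $p(T)=-\mathbf{1}$ is exactly the paper's shifted adjoint $\tilde p := p + p^0$, so there is no discrepancy there. The real content, as you correctly isolate, is the at-most-one-switch claim.

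However, both of the concrete routes you sketch for that last step do not work. First, pointwise negativity of the adjoint is \emph{false} in general: were $p(t,\cdot)<0$ on $[0,T]$, you would get $\phi_1<0$ and $\phi_2<0$ everywhere and hence $t_1=t_2=0$, which contradicts the numerics (the whole point of Step~1 of the algorithm is to \emph{search} for nontrivial $t_1,t_2$). The backward ``maximum principle'' you invoke for the non-local adjoint equation does not hold, because the non-local source term has the opposite sign. Second, your fallback ansatz $\dot\phi_i = (\text{something})\,\phi_i + (\text{lower order})$ is the wrong structure: $\dot\phi_i$ is \emph{not} proportional to $\phi_i$.

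What you are missing is an exact cancellation. Writing the state equation as $\partial_t n_C = R\, n_C$ and the adjoint as $\partial_t \tilde p = -R\,\tilde p + \int_0^1 d_C n_C \tilde p\,dx$, for \emph{any} weight $w$ one has
\[
\frac{d}{dt}\int_0^1 w\, n_C\, \tilde p\,dx
= \int_0^1 w\bigl(R n_C \tilde p + n_C(-R\tilde p + \psi)\bigr)\,dx
= \psi(t)\int_0^1 w\, n_C\,dx,
\qquad
\psi(t):=\int_0^1 d_C n_C \tilde p\,dx.
\]
Taking $w=d_C$ gives the closed scalar ODE $\psi' = \bigl(\int d_C n_C\bigr)\psi$, so $\psi$ keeps the sign of $\psi(T)=p^0\!\int d_C n_C(T,\cdot)\,dx<0$. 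Taking $w=\mu_C$ (resp.\ $w=r_C$) then yields $\phi_1' = \psi\cdot\int \mu_C n_C\,dx<0$ (resp.\ $\phi_2'<0$): each switching function is strictly decreasing, with $\phi_i(T)<0$, hence crosses zero at most once, giving $t_i\in[0,T)$. This two-line computation \emph{is} the proof; your Gr\"onwall/differential-inequality plan should be replaced by it.
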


\begin{proof}
Let us define $U := \left\{u=(u_1,u_2) \text{ such that } 0 \leq u_1 \leq u_1^{max},~0 \leq u_2 \leq u_2^{max} \right\}$. Given a function $u \in L^\infty([0,T],U)$, the associated solution of the equation \eqref{eq_state} belongs to $C([0,T],C(0,1))$, which can be seen as a subset of $C([0,T],L^2(0,1))$. We define $X = L^2(0,1)$.

First, we notice that minimizing the cost $\rho_C(T)$ is equivalent to minimizing the cost $\rho_C(T) - \rho_C(0)$ (as the initial number of cells is prescribed), and it can be written under the integral form:
\begin{align*}
\rho_C(T) - \rho_C(0) 	&= \int^T_0{\rho_C'(t)\,dt} \\
				&= \int^T_0{\int^1_0{\partial_t n_C(t,x)\,dx} \,dt} \\
				&= \int^T_0{\int^1_0{\left[\frac{r_C(x)}{1 + \alpha_C u_2(t)} - d_C(x) \rho_C(t) - \mu_C(x) u_1(t) \right]n_C(t,x)\,dx}\,dt} 
\end{align*}
Thus, in view of applying the PMP, we define the function $f^0 : X \times U \rightarrow \R$ by
\begin{equation*}
f^0(n,u_1,u_2) := \int^1_0{\left[ \frac{r_C(x)}{1 + \alpha_C u_2} - d_C(x) \rho - \mu_C(x) u_1 \right]n(x) \,dx},
\end{equation*}
where $\rho := \int^1_0{n}$, and the hamiltonian is then defined by 
\begin{equation*}
H(n,u_1,u_2,p,p^0) := p^0 f^0(n,u_1,u_2) + \int^1_0{p(x)\left[ \frac{r_C(x)}{1 + \alpha_C u_2} - d_C(x) \rho - \mu_C(x) u_1 \right]n(x) \,dx}.
\end{equation*}
Since $({n}_C(\cdot),{u}(\cdot))$ is optimal, there exists a non trivial pair $(p^0, p(\cdot)) \in \R \times C([0,T],X)$, such that the adjoint equation \eqref{eq_costate} writes:
\begin{equation*}
\frac{\partial p}{\partial t}(t,x) = - \left[\frac{r_C(x)}{1 + \alpha_C u_2(t)} - d_C(x) \rho - \mu_C(x) u_1(t) \right]\cdot \left[ p(t,x) + p^0 \right] + \int^1_0{d(x) n(t,x) \left[ p(t,x) + p^0 \right]\,dx}.
\end{equation*}
Owing to Remark \ref{rq_freefinalstate}, we know that $p^0<0$.

Let us set $\tilde{p} := p + p^0$, which satisfies
\begin{equation*}
\frac{\partial \tilde{p}}{\partial t}(t,x) = - \left[\frac{r_C(x)}{1 + \alpha_C u_2(t)} - d_C(x) \rho - \mu_C(x) u_1(t) \right] \tilde{p}(t,x)  + \int^1_0{d(x) n(t,x) \tilde{p}(t,x) \,dx}.
\end{equation*}
The transversality equation \eqref{eq_transverse} yields $p(T, \cdot) = 0$, i.e., $\tilde{p}(T) = p^0$.

Then, in order to exploit the maximisation condition \eqref{eq_maximisation}, we can split the hamiltonian as
\begin{equation*}
H(t,n_C(t),u_1(t), u_2(t), p(t),p^0) = -\int^1_0{p(t,x) d_C(x) \rho(t) n_C(t,x) \,dx} - u_1(t) \phi_1(t) + \frac{\phi_2(t)}{1 + \alpha_C u_2(t) },
\end{equation*}
where the two switching functions are defined as
\begin{align*}
\phi_1(t) & := \int^1_0{\mu_C(x) n_C(t,x) \tilde{p}(t,x) \,dx}, \\
\phi_2(t) & := \int^1_0{r_C(x) n_C(t,x) \tilde{p}(t,x) \,dx}.
\end{align*}
Thus, we derive the following rule to compute the controls :
\begin{itemize}
\item If $\phi_1(t) > 0$ (resp. $\phi_2(t) > 0$), then $u_1(t) = 0$ (resp. $u_2(t) = 0$).
\item If $\phi_1(t) < 0$ (resp. $\phi_2(t) < 0$), then $u_1(t) = u_1^{max}$ (resp. $u_2(t) = u_2^{max}$).
\end{itemize}
We compute the derivative of the switching function:
\begin{align*}
\phi_1'(t) &= \int^1_0{\mu_C(x) \left( \partial_t n_C(t,x) \tilde{p}(t,x) + n_C(t,x) \partial_t \tilde{p}(t,x)  \right) \,dx} \\
		&= \left(\int^1_0{\mu_C(x) n_C(t,x) \,dx}\right) \cdot \left( \int^1_0{d_C(x) n_C(t,x) \tilde{p}(t,x) \,dx} \right).
\end{align*}
We know that $\int^1_0{\mu_C(x) n_C(t,x) \,dx} > 0$, so that the sign of $\phi_1'(t)$ is given by the sign of
\begin{equation*}
\int^1_0{d_C(x) n_C(t,x) \tilde{p}(t,x) \,dx}.
\end{equation*}
Let us set $\psi_1(t) := \int^1_0{d_C(x) n_C(t,x) \tilde{p}(t,x) \,dx}$. The same computation as before yields
\begin{equation*}
\psi_1'(t) = \left(\int^1_0{d_C(x) n_C(t,x) \,dx}\right) \psi_1(t).
\end{equation*}
Therefore, the sign of $\psi_1(t)$ is constant, given by the sign of $\psi_1(T)  = \int^1_0{d_C(x) n_C(T,x) \tilde{p}(T,x) \,dx} = \int^1_0{d_C(x) n_C(T,x) p^0 \,dx}<0$ since $p^0<0$. This implies that the function $\phi_1$ is decreasing on $[0,T]$. Since at the final time, $\phi_1(T) < 0$, we deduce the existence of a time $t_1 \in [0,T)$ such that $\phi_1(t) \geq 0$ on $[0,t_1]$, and $\phi_1(t) < 0$ on $[t_1,T]$. The same computation yields the same result for $\phi_2$, for some time $t_2 \in [0,T]$. 
\end{proof}

%%%%%%%%%%%%%%%%%%%%%%%
%%%%%%%%%%%%%%%%%%%%%%%

\section{The Continuation Procedure}

\label{Section4}

\subsection{General Principle}
We here recall the principle of direct methods and of continuations for optimization problems. Together with Theorem \ref{th_simplified}, we then derive an algorithm to solve the problem $\bf{(OCP_1)}$. 

\paragraph{On direct methods for PDEs.} \par
Let us give an informal presentation of the principle of a direct method for the resolution of the optimal control of a PDE.
Assume that we have some evolution equation written in a general form on $[0,T] \times [0,1]$ as
\[\frac{\partial n}{\partial t}(t,x) = f(t,n(t),u(t)) + An(t,x), \; n(0) = n^0,\]
where $T$ is a fixed time, $A$ is some operator on the state space, $f$ some function which might depend non-locally on $n$, $u$ a scalar control, $t \in [0,T]$, and $x \in [0,1]$ is the space or phenotype variable. The possible boundary conditions are contained in the operator $A$, which in our case will be the Neumann Laplacian. \par
Consider the optimal control problem
\[\inf_{u\in \mathcal{U}} g(n(T)),\]
where $T$ is fixed, as a function of $u \in \mathcal{U}:= \{u \in L^\infty([0,T],\R), \, 0 \leq u(t) \leq u^{max} \text{ on }[0,T]\}$. \par
\medskip
Further assume that we have discretized this PDE both in time and space through uniform meshes $0 < t_0 < t_1 < \ldots < t_{N_t} := T$, $0 =: x_0 < x_1 < \ldots <  x_{N_x} := 1$, and that we are given some discretizations of the operator $A$ (resp. the function $f$, $g$) denoted by $A_{h}$ (resp. $f_h$, $g_{h}$), where $h:= \frac{1}{N_x}$. With a Euler scheme in time, if one writes formally $n(t_i,x_j) \approx n_{i,j}$, $u(t_i) \approx u_i$ and $n_i := (n_{i,j})_{0\leq j \leq N_x}$, we are faced with the optimization problem 

\[\inf_{u_i, \, 0 \leq i \leq N_t } g_{h} \left(n_{N_t}\right),\] subject to the constraints 
\[n_{i+1,j} = n_{i,j} + h f_{h,j}(t_i,n_{i,j},u_i) + h A_{h} (n_i), \; n_{i,0} = n^0(x_i), \; 0 \leq u_i \leq u^{max}\] for all $0 \leq i \leq N_t, \; 0\leq j \leq N_x$. Note that $f_{h,j} (t_i,n_{i,j},u_i)$ stands for the function $f_h(t_i,n_{i,j},u_i)$ evaluated at $x_j$.
\medskip
\paragraph{On continuation methods for optimization problems.} \par
The optimal control problem of a PDE becomes a finite-dimensional optimization problem once approximated through a direct method, such as the one presented above. Let us denote $\mathcal{P}_1$ this problem. As already mentioned in the introduction, the numerical resolution of such a problem requires a good initial guess for the optimal solution. The idea of a continuation is to deform the problem to an easier problem $\mathcal{P}_0$ for which we either have a very good a priori knowledge of the optimal solution, or expect the problem to be solved efficiently. \par

One then progressively transforms the problem back to the original one thanks to a continuation parameter $\lambda$, thus passing through a series of optimization problems $\left(\mathcal{P}_{\lambda}\right)$. At each step of the procedure, the optimization problem $\mathcal{P}_{\lambda+d \lambda}$ is solved by taking the solution to $\mathcal{P}_{\lambda}$ as an initial guess. \par
%
%Let us emphasize that this technique is common in the case of indirect methods for optimal control problems: the optimal control problem is transformed into a shooting problem thanks to a Pontryagin Maximum Principle. This is equivalent to finding the zeros of an appropriate function, which numerically boils down to the application of a Newton algorithm. Having an initial guess is difficult and addressing this problem is usually done thanks to continuation methods. We refer to~\cite{Trelat2008} for an introduction to continuation techniques for indirect methods in numerical optimal control, and to~\cite{Cerf2012} for an example of application of such methods.

\subsection{From $(\textbf{OCP}_{\textbf{1}})$ to $(\textbf{OCP}_{\textbf{0}})$}

Let us consider $(\textbf{OCP}_{\textbf{1}})$ and formally set the following coefficients to $0$:
 \[\beta_H,\; \beta_C, \; a_{CH}, \; \theta_H, \; \theta_{HC}.\]
Note that by setting $\beta_H$ and $\beta_C$ to $0$, we also imply that the Neumann boundary conditions are no longer enforced.  \par
When doing so, the equations on $n_C$ and $n_H$ are no longer coupled since the constraints do not play any role and the interaction itself (through $a_{CH}$) is switched off. Consequently, the optimal control problem with all these coefficients set to $0$ is precisely $(\textbf{OCP}_{\textbf{0}})$. \par
 
We now define a family of optimal controls $\left(\textbf{OCP}_{\bf{\lambda}}\right)$ where $\lambda \in \mathbb{R}^4$ has each of its components between $0$ and $1$. It is a vector because several consecutive continuations will be performed (in an order to be chosen) on the different parameters. 
For $\lambda = \left(\lambda_i\right)_{1 \leq i \leq 4}$, we use the subscript $\lambda$ for the parameters associated to the optimal control problem $\left(\textbf{OCP}_{\bf{\lambda}}\right)$, and they are defined by: 
\begin{equation*}
\beta^{(\lambda)}_H := \lambda_1 \beta_H, \; \; \beta^{(\lambda)}_C := \lambda_1 \beta_C, \; \; a^{(\lambda)}_{CH} :=\lambda_2 a_{CH}, \;  \;\theta^{(\lambda)}_{CH} :=\lambda_3 \theta_{CH}, \;  \;\theta^{(\lambda)}_{H} :=\lambda_4 \theta_{H}.
\end{equation*}
In other words, $\lambda_1$, $\lambda_2$, $\lambda_3$ and $\lambda_4$ stand for the continuations on the epimutations rates, the interaction coefficient $a_{CH}$, the constraint \eqref{contHC} and the constraint \eqref{contH}, respectively.

\subsection{General Algorithm}
Let us now explain the general approach based on the previous considerations.

\paragraph{Final objective and discretization.}
Our final aim is to solve $(\textbf{OCP}_{\textbf{1}})$ numerically, with $T$ large, and a very fine discretization in time ($N_t$ is taken to be large): $T$, $N_t$ and $N_x$ are thus fixed to certain given values. To do so, we will solve successively several problems 
$\left(\textbf{OCP}_{\bf{\lambda}}\right)$ with the same discretization paremeters. Following the general method introduced about direct methods for PDEs, numerically solving an intermediate optimal control problem $\left(\textbf{OCP}_{\bf{\lambda}}\right)$ for a given $\lambda$ will mean solving the resulting optimization problem. To be more specific, we briefly explain below how the different terms are discretized. Recall that our discretization is uniform both in time $t$ and in phenotype $x$, with respectively $N_t$ and $N_x$ points.
%%%%%%%%%%%%%
\begin{itemize}
\item The non-local terms $\rho_H, \, \rho_C$ are discretized with the rectangle method :
\begin{equation*}
\rho(t_i) = \int_{0}^{1}{n(t_i,x)\, dx} \approx \frac{1}{N_x}\sum_{j=0}^{N_x - 1}{n_{i,j}}.
\end{equation*}
\item The Neumann Laplacian is discretized by its classical discrete explicit counterpart :
\begin{equation*}
\Delta n(t_i,x_j) \approx \frac{n_{i,j+1} - 2n_{i,j} + n_{i,j-1}}{(\Delta x)^2}.
\end{equation*}
We manage to take $N_t$ large enough to make sure that the CFL \[\beta_C T \frac{\left(N_x\right)^2}{N_t} < \frac{1}{2},\] is verified. Using an implicit discretization could allow us to get rid of the CFL condition but an implicit scheme happens to be more time-consuming. Therefore, we preferred using an explicit discretization, as our procedure enables us to discretize the equations finely enough to satisfy the CFL.
\item The selection term (whose sign can be both positive or negative) is discretized through an implicit-explicit scheme to ensure unconditional stability.
\end{itemize}

\noindent
\textbf{Sketch of the algorithm.} 

\medskip
\noindent
\textit{Step 1.} 
We start the continuation by solving $(\textbf{OCP}_{\textbf{0}})$. Thanks to the result \ref{th_simplified}, finding the minimizer of the end-point mapping $\left(u_1, u_2\right) \longmapsto \rho_C(T)$ is equivalent to finding the minimizer of the application $\left(t_1, t_2 \right) \longmapsto \rho_C(T)$ where $t_1$ (resp. $t_2$) are the switching times of $u_1$ (resp. $u_2$) from $0$ to $u_1^{max}$ (resp. $u_2^{max}$), as introduced in Theorem \ref{th_simplified}. \par
Numerically, we can use an arbitrarily refined discretization of $(\textbf{OCP}_{\textbf{0}})$, since the resulting optimization problem has to be made on a $\mathbb{R}^2$-valued function, which leads to a quick and efficient resolution. \par

\medskip
\noindent
\textit{Step 2.} Once $(\textbf{OCP}_{\textbf{0}})$ has been solved numerically, we get an excellent initial guess to start performing the continuation on the parameter $\lambda$. Its different components will successively be brought from $0$ to $1$, either directly or, when needed, through a proper discretization of the interval $[0,1]$. The order in which the successive coefficients are brought to their actual values is chosen so as to reduce the run-time of the algorithm. The precise order and way in which the continuation has been carried out are detailed together with the numerical results in Section \ref{Section4}.

Let us make a few remarks on possible further continuations: 
\begin{itemize}
\item
Since the goal is to take large values for $T$, one might think of performing a continuation on the final time. We again emphasize that the interest and coherence of the method requires to start with a fine discretization at Step 1, but we note that it is also possible to further refine the discretization after Step 2.
\item
Finally, it is also possible to consider the cost as introduced in Remark \ref{rq:OtherObjective}, which can be done through a continuation on the parameter $\lambda_0$.
\end{itemize}
%%%%%%%%%%%%%
% Section numerique
%%%%%%%%%%%%%
\section{Numerical Results}

\label{Section5}
Let us now apply the algorithm with AMPL and IPOPT. 

\noindent
For our numerical experiments, we will use the following values, taken from~\cite{Lorz2013}:
\begin{equation*}
r_C(x) = \frac{3}{1+x^2},~~~ r_H(x) = \frac{1.5}{1 + x^2},
\end{equation*}
%%%%%%%
\begin{equation*}
d_C(x)= \frac{1}{2}(1 - 0.3x),~~~ d_H(x) = \frac{1}{2}(1 - 0.1x),
\end{equation*}
%%%%%%%
\begin{equation*}
\begin{array}{cccc}
a_{HH} = 1, & a_{CC} = 1, & a_{HC} =  0.07, & a_{cH} = 0.01 \\
\end{array}
\end{equation*}
%%%%%%%
\begin{equation*}
\begin{array}{cc}
\alpha_H = 0.01, & \alpha_C = 1,
\end{array}
\end{equation*}
%%%%%%%
\begin{equation*}
\mu_H = \frac{0.2}{0.7^2 + x^2},~~ \mu_C = \max \left(\frac{0.9}{0.7^2 + 0.6 x^2} - 1,0\right),
\end{equation*}
%%%%%%%
\begin{equation*}
u_1^{max} = 2,~~ u^{max}_2 = 5.
\end{equation*}

One can find in~\cite{Pouchol2016} a discussion on the choice of the functions $\mu_H$ and $\mu_C$. Also, we consider the initial data:
\begin{equation*}
n_H(0,x) = K_{H,0} \exp\left(-\frac{(x-0.5)^2}{\varepsilon}\right), ~~n_C(0,x) = K_{C,0} \exp\left(-\frac{(x-0.5)^2}{\varepsilon}\right),
\end{equation*}
with $\varepsilon = 0.1$ and $K_{H,0}$ and $K_{C,0}$ are chosen such that: 
\begin{equation*}
\rho_H(0) = 2.7,~~~ \rho_C(0) = 0.5.
\end{equation*}

The rest of the parameters (namely $\beta_H$, $\beta_C$, $\theta_H$ and $\theta_{HC}$) will depend on the case we consider, and we will specify them in what follows.

\begin{remark}
Note that we have taken $u_1^{max}$ and $u_2^{max}$ to be slightly below their values chosen in~\cite{Pouchol2016} (which makes the problem harder from the applicative point of view). This is because we are here able to let $T$ take larger values, for which the final cost obtained with the optimal strategy $\rho_C(T)$ becomes too small, see below for the related numerical difficulties. \par
As for the epimutations rates, we have proceeded as follows: we have simulated the effect of constant doses and observed the long-time behavior. In the case $\beta_H = \beta_C = 0$, we know by~\cite{Pouchol2016} that both cell densities must converge to Dirac masses. With mutations, we expect some Gaussian-like approximation of these Diracs, the variance of which was our criterion to select a suitable epimutation rate in terms of modeling. It must be large enough to observe a real variability due to the epimutations, but small enough to avoid seeing no selection effects (diffusion dominates and the steady-state looks almost constant).
\end{remark}

\paragraph{Test case 1: $T = 60$.} We recall that this case corresponds to the example presented in~\cite{Pouchol2016}, to which we add a diffusion term. We set the parameters for the diffusion to $\beta_H = 0.001$ and $\beta_C = 0.0001$. The coefficients for the constraints are $\theta_{HC} = 0.4$ and $\theta_H = 0.6$. For such numerical values, the optimal cost satisfies $\rho_C(T) << 1$, which can be source of numerical difficulties. To overcome this, we introduce the following trick: let us define $u^{max,0}_1 = 1$ and $u^{max,0}_2 = 4$. We apply the procedure described in Section \ref{Section3} with the values $u^{max,0}_1$ and $u^{max,0}_2$. We then add another continuation step by raising them to the original desired values $u_1^{max} = 2$ and $u_2^{max} = 5$. In the formalism previously introduced, it amounts to adding two continuation parameters $\lambda_5$ and $\lambda_6$ to the vector $\lambda = (\lambda_i)_{1 \leq i \leq 4}$. The parameters associated to the optimal control problem $(\textbf{OCP}_{\bf{\lambda}})$ are then defined as :
\[
u_1^{max,(\lambda)} := (1 - \lambda_5) u_1^{max,0} + \lambda_5 u_1^{max},~~ u_2^{max,(\lambda)} := (1 - \lambda_6) u_2^{max,0} + \lambda_6 u_2^{max}.
\]
More precisely, we perform the continuation in the following way, summarized in Figure \ref{fig_procedure_1}:
\begin{itemize}
\item First, we solve $(\textbf{OCP}_{\textbf{0}})$, with $u^{max,0}_1 = 1$ and $u^{max,0}_2 = 4$.
\item Second, we add the interaction between the two populations, the diffusion parameters, and the constraint on the number of healthy cells. That is, the parameters $a_{CH}$, $\beta_H$, $\beta_C$ and $\theta_H$ are set to their values.
\item Then, we add the constraint measuring the ratio between the number of healthy cells and the total number of cells, that is $\theta_{HC}$.
\item Lastly, we raise the maximum values for the controls from $u^{max,0}_i$ to $u^{max}_i$ ($i \in \left\{1,2 \right\}$), and we solve $(\textbf{OCP}_{\textbf{1}})$ for $T = 60$.
\end{itemize}

\begin{figure}[h!]
\begin{center}
\includegraphics[scale=0.9]{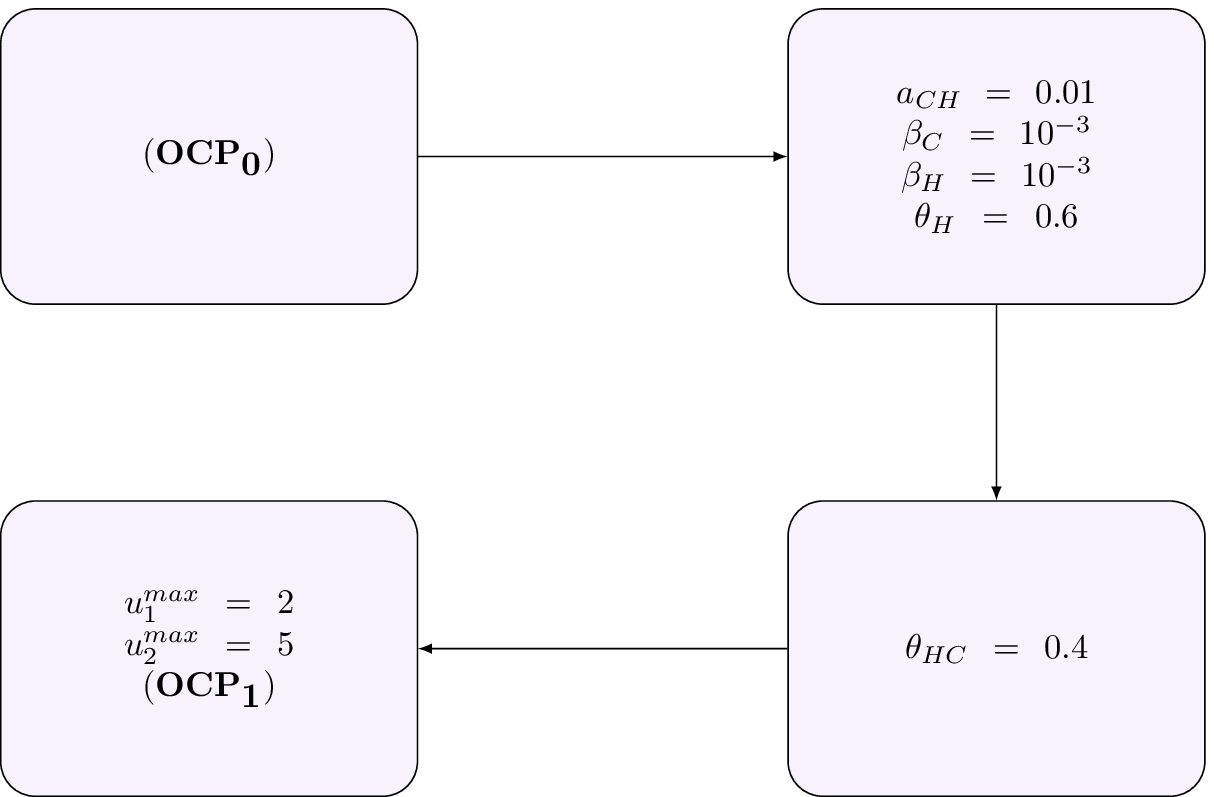}
\end{center}
\caption{Continuation procedure to solve $(\textbf{OCP}_{\textbf{1}})$ for $T = 60$.}
\label{fig_procedure_1}
\end{figure}

Actually, for this set of parameters, only four consecutive resolutions are required to solve $(\textbf{OCP}_{\textbf{1}})$ starting from $(\textbf{OCP}_{\textbf{0}})$. That is, the components of the continuation vector $\lambda = (\lambda_i)_{1\leq i \leq 6}$ are brought directly from $0$ to $1$, taking no intermediate value, in the order schematized on Figure \ref{fig_procedure_1}. We will study further in the paper a case for a larger final time, for which having a more refined discretization is mandatory.

On Figure \ref{fig_trace1}, we plot the optimal controls $u_1$ and $u_2$ at the four steps of the continuation procedure. We also display the evolution of the constraint on the size of the tumor compared to the healthy tissue \eqref{contHC}. We can clearly identify the emergence of the expected structure for the controls, namely a long phase along which the constraint \eqref{contHC} saturates, followed by a bang arc with $u_1 = u_1^{max}$ and $u_2 = u_2^{max}$, and a last boundary arc along which the constraint \eqref{contH} saturates. Throughout this section, we will use a red solid line in our figures for $(\textbf{OCP}_{\textbf{1}})$, a green solid line for $(\textbf{OCP}_{\textbf{0}})$ and a dotted style for anything refering to $(\textbf{OCP}_{\bf{\lambda}})$.

\begin{figure}[!h]
\hspace{-0.8cm}
\includegraphics[scale=0.6]{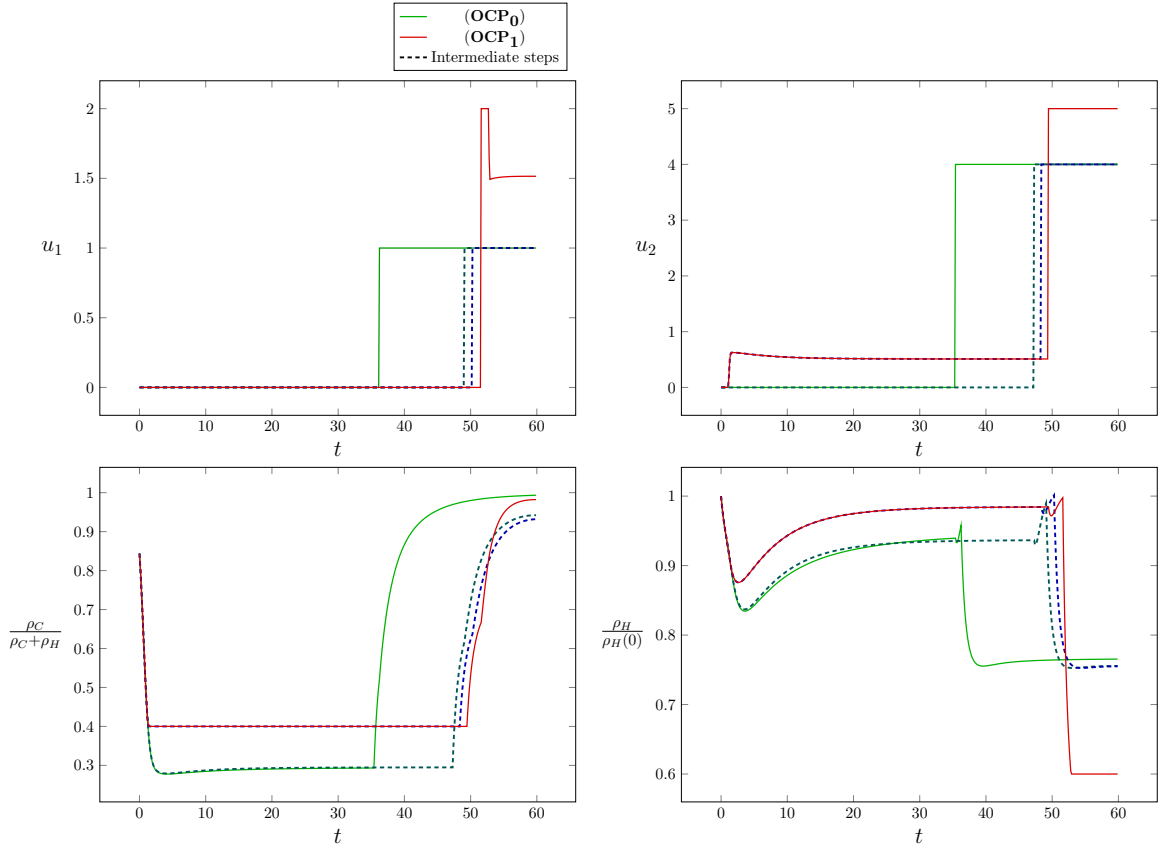}
\caption{Intermediate steps of the continuation procedure for the test case 1.}
\label{fig_trace1}
\end{figure}

\begin{remark}
We would like to emphasize here that our procedure enables us to use a much more refined discretization of the problem than what was done in~\cite{Pouchol2016}. More precisely, we discretize with $N_t = 500$ and $N_x = 20$ points in our direct method. For such a discretization, directly tackling $(\textbf{OCP}_{\textbf{1}})$ with the direct method fails.
\end{remark}

\begin{remark}
Note that the constraint $\rho_H / \rho_H(0) > 0.6$ does not saturate until the last step of the continuation, when raising the maximal value of the controls. Therefore, when we add it at the beginning of the procedure, it is not actually active.
\end{remark}

\paragraph{Test case 2: $T = 80$.}
Whereas one could believe that raising the final time from $T = 60$ to $T = 80$ does not much increase the difficulty of the problem, we noticed that several numerical obstacles appeared. In the following, we consider a discretization with $N_t = 250$ and $N_x = 12$ points, in order to keep the optimization run-time reasonable. Besides, in order to test the robustness of our procedure, we consider a more restrictive constraint on the number of healthy cells: we choose $\theta_H = 0.75$ ($0.6$ in the first example).

First, we use the same numerical trick as explained in our first example, reducing the maximal value for the controls to $u_1^{max,0} = 0.7$ and $u_2^{max,0} = 3.5$. For given values of $u_1^{max}$ and $u_2^{max}$, the optimal cost $\rho_C(T)$ decreases when $T$ increases. This is why we now use smaller values of $u_1^{max,0}$ and $u_2^{max,0}$, compared to the first example where we set them to respectively $1$ and $4$.

We performed the continuation in the following way, summarized in Figure \ref{fig_procedure_2}:
\begin{itemize}
\item First, we solve $(\textbf{OCP}_{\textbf{0}})$, with $u^{max,0}_1 = 0.7$ and $u^{max,0}_2 = 3.5$.
\item Second, we add the interaction between the two populations (via the parameter $a_{CH}$), and the constraint measuring the ratio between the number of healthy cells and the total number of cells \eqref{contHC} is introduced at the intermediate value $\theta^{(\lambda)}_{HC} = 0.3$.
\item We then raise it to its final value of $\theta_{HC} = 0.4$.
\item As a fourth step, we simultaneously add the constraint \eqref{contH} on the healthy cells and raise the maximal values for the controls from $u^{max,0}_i$ to $u^{max}_i$ ($i \in \left\{1,2 \right\}$).
\item Lastly, we add diffusion to the model, via the parameters $\beta_H$ and $\beta_C$, and we solve $(\textbf{OCP}_{\textbf{1}})$ for $T = 80$.
\end{itemize}

\begin{figure}
\begin{center}
\includegraphics[scale = 0.9]{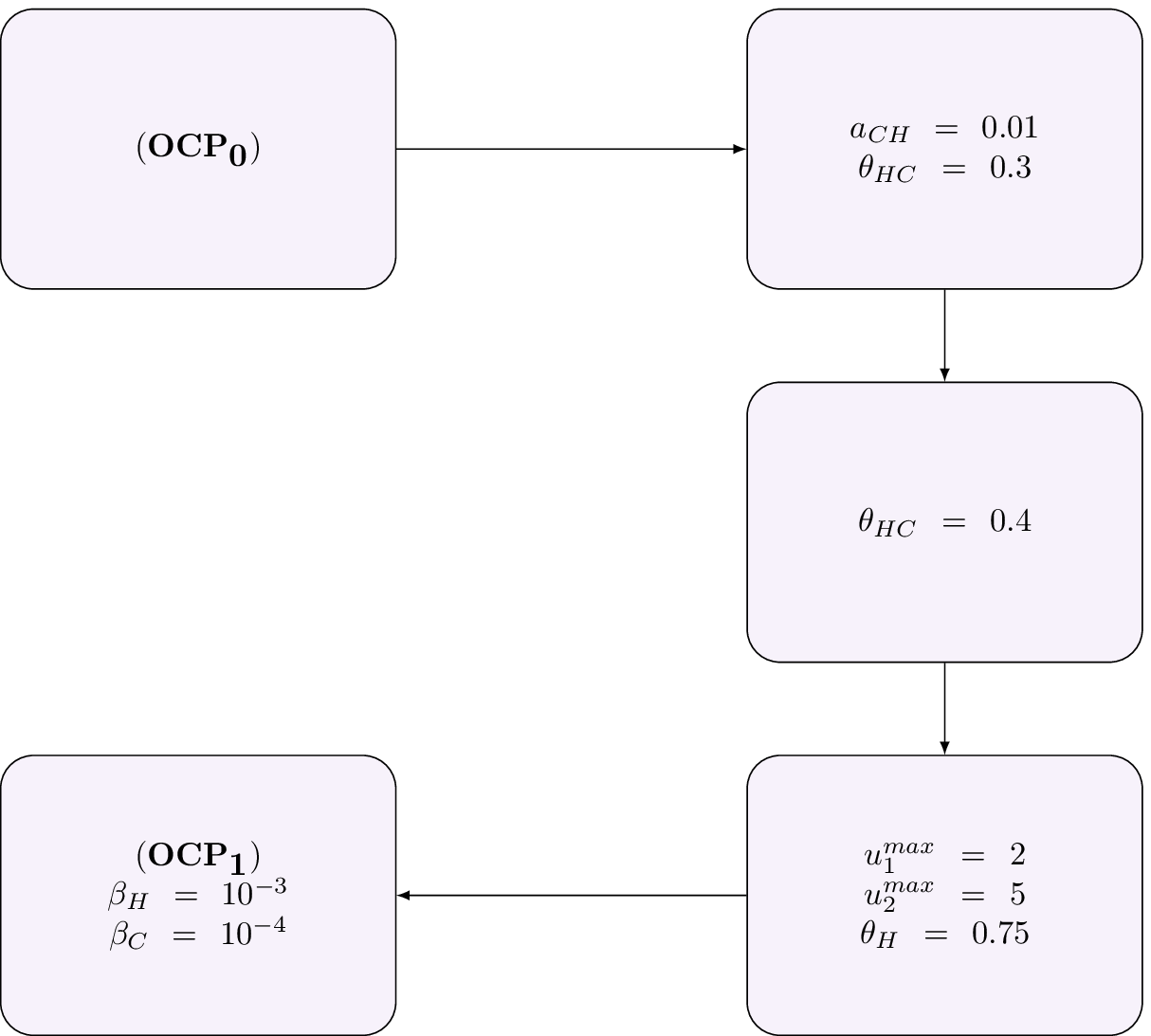}
\end{center}
\caption{Continuation procedure to solve $(\textbf{OCP}_{\textbf{1}})$ for $T = 80$.}
\label{fig_procedure_2}
\end{figure}

At this point, we need to make two important remarks concerning this continuation procedure.
\begin{remark}
The order in which we make the components of the continuation vector $\lambda = (\lambda_i)_{1 \leq i \leq 6}$ vary from $0$ to $1$ is different from the order we presented for $T = 60$. For instance, we noticed that the diffusion makes the problem significantly harder to solve, although the Laplacians where discretized using the simplest explicit finite-difference approximation. Therefore, we only added it at the last step of the continuation.
\end{remark}

\begin{remark}
Whereas for $T = 60$, raising the $(\lambda_i)_{1 \leq i \leq 6}$ directly from $0$ to $1$ was enough to solve $(\textbf{OCP}_{\textbf{1}})$, it became necessary to use a more refined discretization for $T=80$. This fact justifies the principle of our continuation procedure, as each step is necessary to solve the next one, and thus $(\textbf{OCP}_{\textbf{1}})$ in the end.
 For instance, on Figure \ref{fig_contHC}, we display the evolution of the constraint \eqref{contHC}:
\begin{equation*}
\frac{\rho_H(t)} {\rho_C(t) + \rho_H(t)} \geq \lambda_3 \theta_{HC}
\end{equation*}
when raising the continuation parameter $\lambda_3$ from $0$ to $1$.
On Figure \ref{fig_umax}, we display the evolution of the controls $u_1$ and $u_2$ when raising their maximal allowed values from $(u_1^{max,0},u_2^{max,0})$ to $(u_1^{max},u_2^{max})$. For the sake of readability, we do not show all the steps of the continuation, but only some of them. It clearly shows how the structure of the optimal solution evolves from the simple one of $(\textbf{OCP}_{\textbf{0}})$ to the much more complex one of $(\textbf{OCP}_{\textbf{1}})$.
\end{remark}

\begin{figure}
\begin{center}
\hspace{-1.3cm}
\includegraphics[scale=0.95]{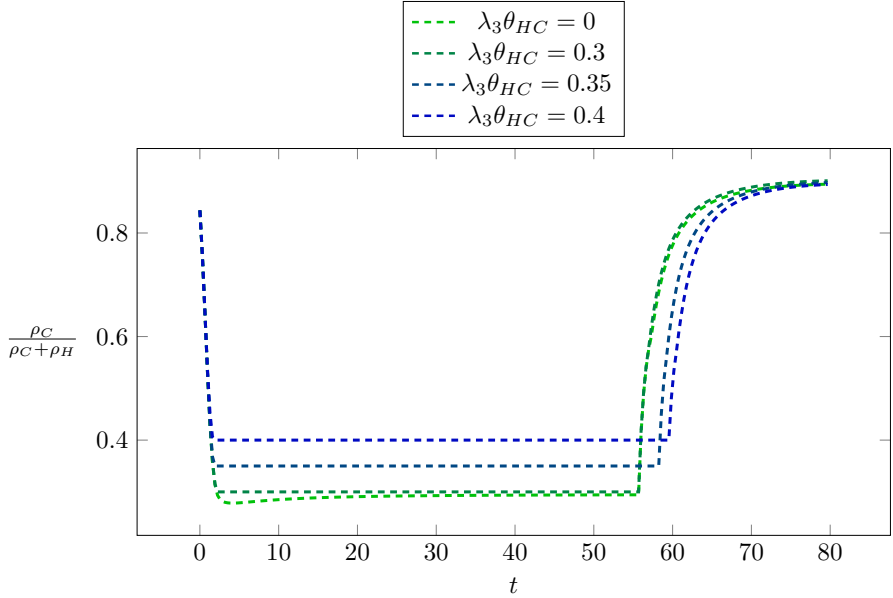}
\end{center}
\caption{Evolution of the constraint \eqref{contHC} during the continuation.}
\label{fig_contHC}
\end{figure}

\begin{figure}
\hspace{-0.8cm}
\includegraphics[scale=0.62]{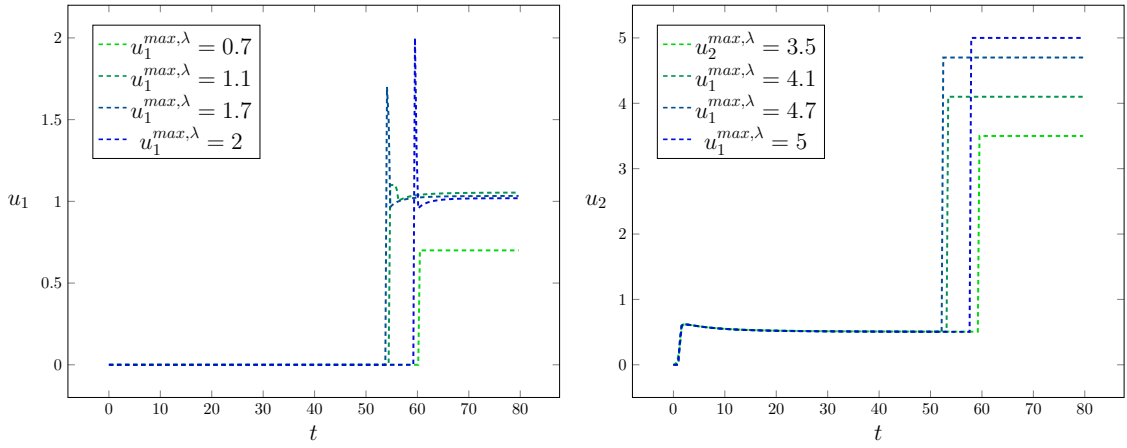}

\caption{Raising the maximal values $u_1^{max}$, $u_2^{max}$ for the controls.}
\label{fig_umax}
\end{figure}

Finally, we display on Figure \ref{fig_nC} the evolution of $n_C$, when applying the optimal strategy we found solving $(\textbf{OCP}_{\textbf{1}})$. In black we represent the initial condition $n_C(0,\cdot)$, and with lighter shades of red, the evolution of $n_C(t,x)$ as time increases.
\begin{figure}
\begin{center}
\hspace{-0.7cm}
\includegraphics[scale=1.1]{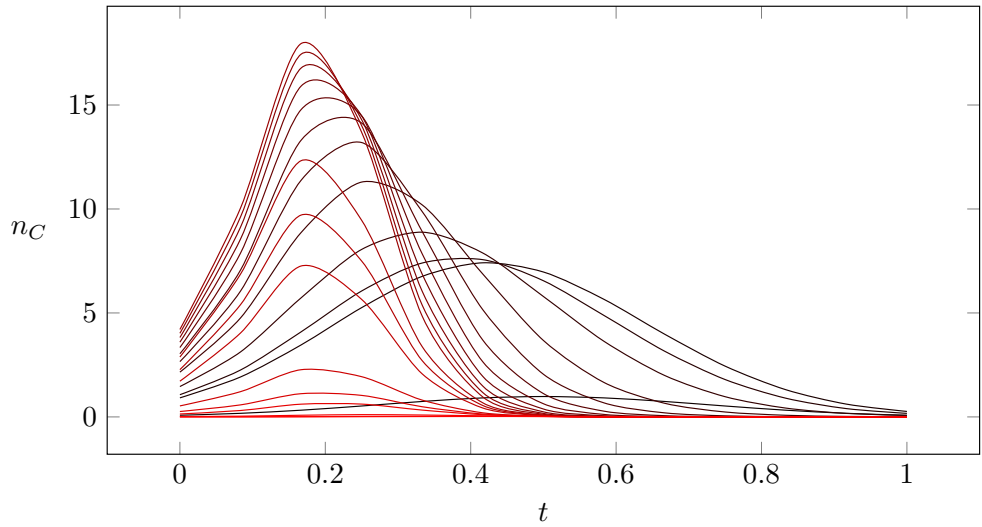}
\end{center}
\vspace{-0.2cm}
\caption{Evolution of $n_C$ for the optimal solution of $(\textbf{OCP}_{\textbf{1}})$.}
\label{fig_nC}
\end{figure}

One clearly sees that the optimal strategy has remained the same: the cancer cell population concentrates on a sensitive phenotype, which is the key idea to then use the maximal tolerated doses. In other words, the strategy identified in the previous work~\cite{Pouchol2016} is robust with respect to addition of epimutations. An important remark is that the cost obtained with the optimal strategy is higher with the mutations than without them: this is because we cannot have convergence to a Dirac located at a sensitive phenotype, but to a smoothed (Gaussian-like) version of that Dirac. There will always be residual resistant cells which will make the second phase less successful.

\paragraph{Further comments on the continuation principle.} A continuation procedure can be used in a wide range of applications, and one can easily imagine ways to generalize the ideas we have previously introduced. Let us illustrate our point with an example: we have presented a procedure to solve $(\textbf{OCP}_{\textbf{1}})$, for some initial conditions $n_H^0$ and $n_C^0$. Suppose that we wish to solve $(\textbf{OCP}_{\textbf{1}})$ for some different initial conditions $\tilde{n}_H^0$ and $\tilde{n}_C^0$. Biologically, this could correspond to finding a control strategy for a different tumor. A natural idea is then to use a continuation procedure to deform the problem from the initial conditions $(n_H^0,n_C^0)$ to $(\tilde{n}_H^0,\tilde{n}_C^0)$, rather than applying again the whole procedure to solve $(\textbf{OCP}_{\textbf{1}})$ with $\tilde{n}_H^0$ and $\tilde{n}_C^0$. We successfully performed some numerical tests to validate this idea: if we dispose of a set of initial conditions for which we want to solve $(\textbf{OCP}_{\textbf{1}})$, it is indeed faster to solve $(\textbf{OCP}_{\textbf{1}})$ for one of them and then perform a continuation on the initial data, rather than solving $(\textbf{OCP}_{\textbf{1}})$ for each of the initial conditions. More generally, any parameter in the model could lend itself to a continuation.

\paragraph{Test case 3: $T = 60$, more general objective function.} 
The optimal strategy obtained with the previous objective function $\rho_C(T)$ might seem surprising, in particular because it advocates for very limited action at the beginning: giving no cytotoxic drugs and low loses of cytostatic drugs. To further investigate the robustness of this strategy, let us also consider the objective function $\lambda_0 \int_0^T \rho_C(s) \,ds + (1 - \lambda_0)\rho_C(T)$ as introduced in Remark \ref{rq:OtherObjective}, for different values of $\lambda_0$. To ease numerical computations, we take $\beta_H = \beta_C = 0$, $u_1^{max}= 2$, $u_2^{max} = 5$, and finally $N_x = 20$, $N_t = 100$. The results are reported on Figure \ref{fig:coutL1}.

\begin{figure}
\begin{center}
\includegraphics[scale=0.6]{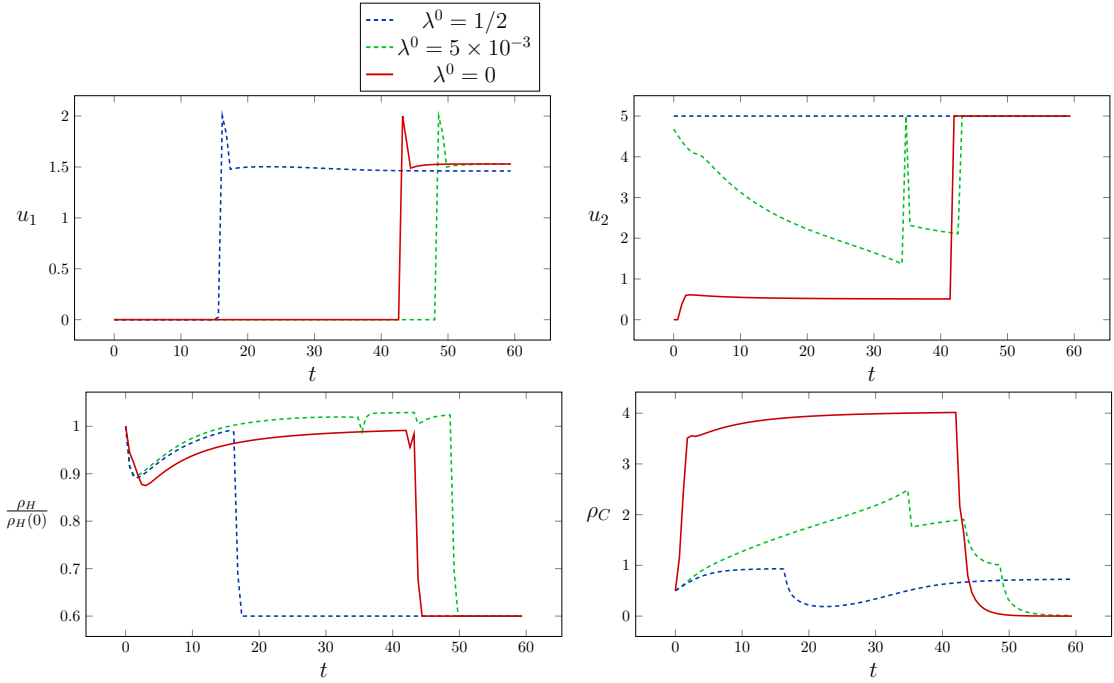}
\end{center}
\vspace{-0.2cm}
\caption{Adding a term accounting for the $L^1$ norm $\int \rho_C$ in the cost.}
\label{fig:coutL1}
\end{figure}

For $\lambda_0 = 0.5$, the $L^1$ term is dominant in the optimization and the variations of $\rho_C$ are smaller over the interval $]0,T[$. However, although there is a significant change in the control $u_2$ which is always equal to $u_2^{max}$, $u_1$ has kept the same structure: an arc with no drugs, a short arc with maximal doses and a final arc with intermediate doses. The only (though important) difference is that the first arc is not a long one as before.  \par

We infer from these numerical simulations that the optimal structure is inherent in the equations: there is no choice but to let the cancer cell density concentrate on a sensitive phenotype. Since at $\lambda_0 = 0.5$, the integral term dominates, we also consider other convex combinations with smaller values of $\lambda_0$ up to $0$, for which $u_2$ takes intermediate values before being equal to $u_2^{max}$, while $u_1 = 0$ on a longer arc. 

Among these families of objectives (depending on $\lambda_0$) and their outcomes, it is up to the oncologist to decide which one is best, depending also on what is not modeled here, for example metastases. 
%%%%%%%%%%%%%
% Section perspectives
%%%%%%%%%%%%%
\section{Perspectives}

\label{Section6}

For epimutations with rates in reasonable ranges, we found that the optimal strategy obtained in~\cite{Pouchol2016} is preserved, which is a proof of its robustness. We believe that robustness can further be tested for more complicated models, with the same strategy.

For example, one may want to model longer-range mutations by a non-local alternative to the Laplacian, either through a mutation term through a Kernel~\cite{Bonnefon2015}, or through a non-local operator like a fractional Laplacian~\cite{Cabre2013}. These could both be added by continuation, on the Kernel starting from the integro-differential model, or on the fractional exponent for the fractional Laplacian, starting from the case of the (classical) Laplacian.

Another (local) possibility is to choose a more general elliptic operator. In particular, one can think of putting a drift term to model the \textit{stress-induced adaptation}~\cite{Coville2013a, Chisholm2016a}, namely epimutations that occur because cells actively change their phenotype in a certain direction depending on the environment created by the drug.

Finally, other objective functions can also be considered through a continuation as already introduced in the present article: one minimizes a convex combination of $\rho_C(T)$ and the objective function of interest.

We refer to~\cite{Pouchol2016} for other possible generalizations of the model that might be of interest.

%%%%%%%%%%%%%%
% Section conclusion
%%%%%%%%%%%%%%
\section{Conclusions}

\label{Section7}

The objective of the present work was to numerically solve an optimal control generalizing the one studied in the article~\cite{Pouchol2016}, in which epimutations were neglected. We have developed an approach which significantly reduces the computation time and improves precision, even without mutations. More precisely, by setting enough parameters to $0$ in the original optimal control problem, we arrive to a situation where the problem can be tackled by a Pontryagin Maximum Principle in infinite dimension. Direct methods and continuation then allow to solve the problem of interest, with the strong improvement that we actually start the continuation with a very refined discretization.

We advocate that this approach is suitable for many complicated optimal controls problems. This would be the case as soon as an appropriate simplification leads to a problem for which precise results can be obtained by a PMP. In particular, this approach is an option to be investigated for optimal control problems which have a high-dimensional discretized counterpart.

\bibliographystyle{unsrt}

\bibliography{CitationNumericalOCP.bib}

\begin{thebibliography}{10}

\bibitem{Pouchol2016}
Camille Pouchol, Jean Clairambault, Alexander Lorz, and Emmanuel Tr{\'e}lat.
\newblock Asymptotic analysis and optimal control of an integro-differential
  system modelling healthy and cancer cells exposed to chemotherapy.
\newblock {\em Preprint arXiv:1612.04698, accepted in Journal de
  Math\'ematiques pures et appliqu\'ees}, 2016.

\bibitem{Lorz2013}
Alexander Lorz, Tommaso Lorenzi, Jean Clairambault, Alexandre Escargueil, and
  Beno{\^\i}t Perthame.
\newblock {E}ffects of space structure and combination therapies on phenotypic
  heterogeneity and drug resistance in solid tumors.
\newblock {\em ar{X}iv preprint ar{X}iv:1312.6237}, 2013.

\bibitem{Fourer2002}
Robert Fourer, David~M Gay, and Brian~W Kernighan.
\newblock {A} modeling language for mathematical programming.
\newblock {\em {D}uxbury {P}ress}, 36(5):519--554, 2002.

\bibitem{Waechter2006}
Andreas W{\"a}chter and Lorenz~T Biegler.
\newblock {O}n the implementation of an interior-point filter line-search
  algorithm for large-scale nonlinear programming.
\newblock {\em {M}athematical programming}, 106(1):25--57, 2006.

\bibitem{Trelat2012}
Emmanuel Tr{\'e}lat.
\newblock {O}ptimal control and applications to aerospace: some results and
  challenges.
\newblock {\em {J}ournal of {O}ptimization {T}heory and {A}pplications},
  154(3):713--758, 2012.

\bibitem{Cerf2012}
Max Cerf, Thomas Haberkorn, and Emmanuel Tr{\'e}lat.
\newblock Continuation from a flat to a round earth model in the coplanar orbit
  transfer problem.
\newblock {\em Optimal Control Applications and Methods}, 33(6):654--675, 2012.

\bibitem{Chupin2016}
Maxime Chupin, Thomas Haberkorn, and Emmanuel Tr{\'e}lat.
\newblock {Low-Thrust Lyapunov to Lyapunov and Halo to Halo with
  $L^2$-Minimization }.
\newblock {\em {ESAIM: Mathematical Modelling and Numerical Analysis}},
  51(3):965--996, 2017.

\bibitem{Haberkorn2006}
{Gergaud, Joseph} and {Haberkorn, Thomas}.
\newblock Homotopy method for minimum consumption orbit transfer problem.
\newblock {\em ESAIM: COCV}, 12(2):294--310, 2006.

\bibitem{Caillau2012}
J.-B. Caillau, B.~Daoud, and J.~Gergaud.
\newblock Minimum fuel control of the planar circular restricted three-body
  problem.
\newblock {\em Celestial Mechanics and Dynamical Astronomy}, 114(1):137--150,
  Oct 2012.

\bibitem{Bulirsch1993}
Dr~Roland Bulirsch, Dipl Math~Edda Nerz, Priv-Doz Dr Hans~Josef Pesch, and Dipl
  Math~Oskar von Stryk.
\newblock Combining direct and indirect methods in optimal control: Range
  maximization of a hang glider.
\newblock In {\em Optimal control}, pages 273--288. Springer, 1993.

\bibitem{Pesch1994}
Hans~Josef Pesch.
\newblock A practical guide to the solution of real-life optimal control
  problems.
\newblock {\em Control and cybernetics}, 23(1):2, 1994.

\bibitem{vonStryk1992}
O.~von Stryk and R.~Bulirsch.
\newblock Direct and indirect methods for trajectory optimization.
\newblock {\em Annals of Operations Research}, 37(1):357--373, Dec 1992.

\bibitem{Diekmann2004}
Odo Diekmann et~al.
\newblock {A} beginner's guide to adaptive dynamics.
\newblock {\em {B}anach {C}enter {P}ublications}, 63:47--86, 2004.

\bibitem{Diekmann2005}
Odo Diekmann, Pierre-Emanuel Jabin, St{\'e}phane Mischler, and Beno\^{\i}t
  Perthame.
\newblock {T}he dynamics of adaptation: an illuminating example and a
  {H}amilton-{J}acobi approach.
\newblock {\em {T}heoretical {P}opulation {B}iology}, 67(4):257--271, 2005.

\bibitem{Perthame2006}
Beno{\^\i}t Perthame.
\newblock {\em {T}ransport equations in biology}.
\newblock Springer Science \& Business Media, 2006.

\bibitem{Chisholm2016}
Rebecca~H. Chisholm, Tommaso Lorenzi, and Jean Clairambault.
\newblock Cell population heterogeneity and evolution towards drug resistance
  in cancer: Biological and mathematical assessment, theoretical treatment
  optimisation.
\newblock {\em Biochimica et Biophysica Acta (BBA) - General Subjects},
  1860(11):2627--2645, Nov 2016.

\bibitem{Schaettler2015}
Heinz Sch{\"a}ttler and Urszula Ledzewicz.
\newblock {\em {O}ptimal {C}ontrol for {M}athematical {M}odels of {C}ancer
  {T}herapies}.
\newblock Springer New York, 2015.

\bibitem{Costa1992}
MIS Costa, JL~Boldrini, and RC~Bassanezi.
\newblock {O}ptimal chemical control of populations developing drug resistance.
\newblock {\em {M}athematical {M}edicine and {B}iology}, 9(3):215--226, 1992.

\bibitem{kimmelSwierniak2006}
Marek Kimmel and Andrzej \'Swierniak.
\newblock Control theory approach to cancer chemotherapy: Benefiting from phase
  dependence and overcoming drug resistance.
\newblock In Avner Friedman, editor, {\em Tutorials in Mathematical Biosciences
  III}, volume 1872 of {\em Lecture Notes in Mathematics}, pages 185--221.
  Springer Berlin / Heidelberg, 2006.

\bibitem{Ledzewicz2006}
Urszula Ledzewicz and Heinz Sch{\"a}ttler.
\newblock Drug resistance in cancer chemotherapy as an optimal control problem.
\newblock {\em Discrete and Continuous Dynamical Systems Series B}, 6(1):129,
  2006.

\bibitem{Ledzewicz2014}
Urszula Ledzewicz and Heinz Sch{\"a}ttler.
\newblock On optimal chemotherapy for heterogeneous tumors.
\newblock {\em Journal of Biological Systems}, 22(02):177--197, 2014.

\bibitem{Carrere2017}
C\'ecile Carr\`ere.
\newblock Optimization of an in vitro chemotherapy to avoid resistant tumours.
\newblock {\em Journal of Theoretical Biology}, 413:24--33, Jan 2017.

\bibitem{Lorz2013a}
Alexander Lorz, Tommaso Lorenzi, Michael~E Hochberg, Jean Clairambault, and
  Beno{\^\i}t Perthame.
\newblock {P}opulational adaptive evolution, chemotherapeutic resistance and
  multiple anti-cancer therapies.
\newblock {\em {ESAIM}: {M}athematical {M}odelling and {N}umerical {A}nalysis},
  47(02):377--399, 2013.

\bibitem{Greene2014}
James Greene, Orit Lavi, Michael~M Gottesman, and Doron Levy.
\newblock The impact of cell density and mutations in a model of multidrug
  resistance in solid tumors.
\newblock {\em Bulletin of mathematical biology}, 76(3):627--653, 2014.

\bibitem{Lorenzi2015a}
Tommaso Lorenzi, Rebecca~H Chisholm, Laurent Desvillettes, and Barry~D Hughes.
\newblock Dissecting the dynamics of epigenetic changes in phenotype-structured
  populations exposed to fluctuating environments.
\newblock {\em Journal of theoretical biology}, 386:166--176, 2015.

\bibitem{Lorz2015}
Alexander Lorz, Tommaso Lorenzi, Jean Clairambault, Alexandre Escargueil, and
  Beno{\^\i}t Perthame.
\newblock {M}odeling the effects of space structure and combination therapies
  on phenotypic heterogeneity and drug resistance in solid tumors.
\newblock {\em {B}ulletin of mathematical biology}, 77(1):1--22, 2015.

\bibitem{li2012}
Xungjing Li and Jiongmin Yong.
\newblock {\em Optimal control theory for infinite dimensional systems}.
\newblock Springer Science \& Business Media, 2012.

\bibitem{Bonnefon2015}
Olivier Bonnefon, J{\'e}r{\^o}me Coville, and Guillaume Legendre.
\newblock Concentration phenomenon in some non-local equation.
\newblock {\em Preprint arXiv:1510.01971}, 2015.

\bibitem{Cabre2013}
Xavier Cabr{\'e} and Jean-Michel Roquejoffre.
\newblock The influence of fractional diffusion in fisher-kpp equations.
\newblock {\em Communications in Mathematical Physics}, 320(3):679--722, 2013.

\bibitem{Coville2013a}
Jerome Coville.
\newblock Convergence to equilibrium for positive solutions of some
  mutation-selection model.
\newblock {\em Preprint arXiv:1308.6471}, 2013.

\bibitem{Chisholm2016a}
Rebecca~H Chisholm, Tommaso Lorenzi, and Alexander Lorz.
\newblock Effects of an advection term in nonlocal lotka--volterra equations.
\newblock {\em Communications in Mathematical Sciences}, 14(4):1181--1188,
  2016.

\end{thebibliography}

\end{document}